\numberwithin{equation}{section}
\newtheorem{prop}{Proposition}[section]
\newtheorem{theorem}[prop]{Theorem}
\newtheorem{lemma}[prop]{Lemma}
\newtheorem{corollary}[prop]{Corollary}
\newtheorem{example}[prop]{Example}
\newtheorem{definition}[prop]{Definition}
\def\begeq{\begin{equation}}
\def\endeq{\end{equation}}
\begin{document}
\title{The relative volume function and the capacity of sphere on asymptotically hyperbolic manifolds}
\author{Xiaoshang Jin}
\date{}
\maketitle
\begin{abstract}
Following the work of Li-Shi-Qing, we propose the definition of the relative volume function for an AH manifold. It is not a constant function in general and we study the regularity of this function. We use this function to give an accurate characterization of the height of the geodesic defining function for the AH manifold with a given boundary metric. It is also proved that such functions
are uniformly bounded from below at infinity and the bound only depends on the dimension. As an application, we use this function to research  the capacity of balls in AH manifold and provide some limit results.
\end{abstract}
\section{Introduction}  \label{sect1}
Let $\overline{X}$ be an $n+1$ dimensional smooth manifold with boundary. We use $X$ and $\partial X$ to denote the interior and the boundary of $\overline{X}.$ A complete noncompact metric $g^+$ on $X$ is called conformally compact if there exists a smooth defining function $\rho$ on  $\overline{X}$ such that the conformal metric $g=\rho^2g^+$ can be continuously extended to $\overline{X}.$
The  defining function satisfies that
\begin{equation}\label{1.1}
  \rho>0\ \ in \ \ X,\ \ \ \ \rho=0\ \ on\ \ \partial X,\ \ \ \ d\rho\neq 0\ \ on \ \ \partial X.
\end{equation}
We say that $(X,g^+)$ is $C^{m,\alpha}\ (W^{k,p},$ smoothly) conformally compact if $g=\rho^2g^+$ is $C^{m,\alpha}\ (W^{k,p},$ smooth) on $\overline{X}.$ Here $m,k\in \mathbb{N},\alpha\in[0,1)$ and $p\geq 1.$  The restriction of $g$ on the boundary $\hat{g}=g|_{\partial X}$ is called the boundary metric. It is well known that $g^+$ induces a conformal class $(\partial X,[\hat{g}])$ on the boundary, called the conformal infinity of $g^+.$ If $g^+$ is $C^2$ conformally compact, then a straightforward calculation yields that
the curvature of $(X,g^+)$ is of the following from \cite{graham2000volume}:
\begin{equation}\label{1.2}
  R_{ijkl}[g^+]=|d\rho|^2_{g}(g^+_{ik}g^+_{jl}-g^+_{il}g^+_{jk})+O(\rho^{-3})
\end{equation}
near $\partial X.$ Here $O(\rho^{-3})$ denotes the (0,4) tensors and takes norm with respect to metric $g.$  Readers can see \cite{besse2007einstein} for the conformal transformation
law of curvatures. Hence the sectional curvature $K[g^+]=-|d\rho|^2_{g}+O(\rho)$ is uniformly approaching to $-|d\rho|^2_{g}$ (see \cite{mazzeo1988hodge}).
\par Thus if $g^+$ is at least $C^2$ conformally compact and $|d\rho|^2_{g}|_{\partial X}=1,$ we say $(X,g^+)$ is an asymptotically hyperbolic manifold or AH manifold for short. Given an  AH manifold $(X,g^+)$ and a boundary representative $\hat{g}\in[\hat{g}],$ Graham and Lee
proved that there exists a unique defining function $x$ such that $|dx|^2_{x^2g^+}=1$ in a neighborhood of the boundary $\partial X\times[0,\delta)$
for some small $\delta$ in \cite{graham1991einstein}. We call $x$ the geodesic defining function of $(X,g^+)$ associated with $\hat{g}.$
However, the new geodesic compactification may not be as smooth as the initial one. In fact, if $g=\rho^2g^+$ is a $C^{m,\alpha}$ compactification with $m \geq 2$ and $\alpha\geq 0,$ then the geodesic compactification $\bar{g}=x^2g^+$ with the same boundary metric is, at least $C^{m-1,\alpha},$ see \cite{lee1994spectrum} or \cite{chrusciel2005boundary}.
\par Let $\delta>0$ be the supremum where the geodesic defining function is well defined as above. One interesting problem is: how big (small) is $\delta?$ It seems that $\delta$ is dependent on $g^+$ and $\hat{g},$  can we give a mathematical expression between them? We answer the problems in this paper.
\\ \par If in addition, the AH manifold $(X,g^+)$ also satisfies the Einstein equation,
$$ Ric[g^+]=-ng^+$$
 then we call $g^+$ a conformally compact Einstein metric or Poincar\'e-Einstein metric.  In recent years it has become the main theme in the study of conformal geometry as it plays an important role in the proposal of AdS/CFT correspondence \cite{maldacena1999large} in theoretic physics.
\par In this paper, we mainly research the AH manifold of order $2,$ i.e. the sectional curvature
satisfies that
\begin{equation}\label{1.3}
  K[g^+]+1=O(\rho^2)
\end{equation}
We emphasize that a conformally compact Einstein metric $g^+$ satisfies the curvature condition (\ref{1.3}), (one can see section 2 in \cite{jin2021finite} or lemma 1.4 in \cite{li2017gap}, or lemma 3.1 in \cite{chrusciel2005boundary} for more information). Therefore we also call $g^+$
an asymptotically hyperbolic Einstein (or AHE for short) metric.
\\ \par The motivation for this paper is mainly from the work of Li-Shi-Qing when they study the gap phenomena and curvature estimates for AH manifolds. They proved that:
\begin{lemma}\cite{li2017gap}
  Let $(X^{n+1},g^+)$ be an $n+1-$ dimensional AH manifold with a $C^2$ conformally compactification $g=\rho^2g^+,$ if the sectional curvature satisfies (\ref{1.3}), then for any $p\in X,\frac{Vol(\partial B_{g^+}(p,t))}{\sinh^nt}$ is convergent as $t$ tends to infinity.
\end{lemma}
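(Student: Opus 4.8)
The plan is to pass to geodesic polar coordinates centred at $p$ and reduce the statement to an asymptotic analysis of the Jacobian of the exponential map through a Riccati equation. Fix $p$, identify the unit sphere $U_pX$ with $S^n$, write $\gamma_\theta(t)=\exp_p(t\theta)$, and let $c(\theta)$ be the cut distance and $A(t,\theta)$ the area density of $g^+$, which is smooth and positive for $t<c(\theta)$. Then $\mathrm{Vol}(\partial B_p^{g^+}(t),g^+)=\int_{\{c(\theta)>t\}}A(t,\theta)\,d\theta$. Extending the integrand by $0$ to all of $S^n$, it suffices to show that $\frac{A(t,\theta)}{\sinh^n t}\,\mathbf 1_{\{c(\theta)>t\}}$ converges pointwise a.e.\ as $t\to\infty$ and is bounded by a fixed constant, for then dominated convergence on the compact sphere $S^n$ gives the claim; this also disposes of the cut locus automatically, since directions with finite $c(\theta)$ simply drop out. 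Writing $A(t,\theta)=\sinh^n t\,e^{a(t,\theta)}$, the task becomes the convergence and uniform boundedness of $a(t,\theta)$.

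First I would record the Riccati equation governing $A$. If $S(t)$ denotes the shape operator of $\partial B_p^{g^+}(t)$ along $\gamma_\theta$, acting on $\dot\gamma_\theta^{\perp}$, then
\[
S'+S^2+R_\gamma=0,\qquad \partial_t\log A=\mathrm{tr}\,S,
\]
where $R_\gamma=R[g^+](\,\cdot\,,\dot\gamma_\theta)\dot\gamma_\theta$. The hyperbolic model $S_0=\coth t\cdot\mathrm{Id}$ solves $S_0'+S_0^2-\mathrm{Id}=0$ and satisfies $\mathrm{tr}\,S_0=n\coth t=\partial_t\log(\sinh^n t)$. Subtracting yields $\partial_t a=\mathrm{tr}(S-S_0)$, so the convergence of $a$ is equivalent to the integrability in $t$ of $\mathrm{tr}(S-S_0)$.

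The heart of the matter is to show that $S(t)\to\mathrm{Id}$ fast enough. Using the curvature hypothesis (\ref{1.3}) together with the standard comparison $\rho(\gamma_\theta(t))\asymp e^{-t}$, which holds with a constant uniform in $\theta$ because of the degeneration of the normal form of $g^+$ at the compact boundary, I obtain $R_\gamma+\mathrm{Id}=O(e^{-2t})$ uniformly. Riccati comparison then traps $S$ between the shape operators of constant-curvature models with curvature tending to $-1$, which shows both that $S$ stays bounded (no conjugate points for large $t$) and that $S\to\mathrm{Id}$. Setting $T=S-S_0$, one finds
\[
T'+S_0T+TS_0+T^2=-(R_\gamma+\mathrm{Id})=O(e^{-2t}),
\]
and since $S_0\to\mathrm{Id}$ the linear map $T\mapsto S_0T+TS_0$ is coercive for large $t$; once $|T|$ is small, an integrating-factor estimate against the forcing $O(e^{-2t})$ gives $|T(t,\theta)|=O(t\,e^{-2t})$ uniformly in $\theta$. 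In particular $\mathrm{tr}(S-S_0)$ is uniformly integrable in $t$, so $a(t,\theta)$ converges for each admissible $\theta$ and is uniformly bounded, and the theorem follows.

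I expect the main obstacle to lie precisely in making the Riccati asymptotics uniform in the direction $\theta$. This requires, on the one hand, fixing a radius $t_0$ after which every geodesic ray $\gamma_\theta$ lies in the asymptotic region $\{\rho<\delta\}$ and remains there (using that the level sets of $\rho$ are convex near the boundary), so that the bound $\rho(\gamma_\theta(t))\le Ce^{-t}$ and hence $R_\gamma+\mathrm{Id}=O(e^{-2t})$ hold with constants independent of $\theta$; and on the other hand, controlling the nonlinear term $T^2$ uniformly while establishing the smallness of $|T|$ needed to start the integrating-factor estimate. Everything downstream, namely the decay rate of $T$ and the uniform bound feeding dominated convergence, rests on this uniformity.
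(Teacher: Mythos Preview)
Your proposal is correct, and it takes a genuinely different route from the paper's argument.

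The paper does not work in polar coordinates at all. Instead it fixes the geodesic defining function $x$, sets $r=-\ln x$, and uses (i) the triangle inequality to see that $s_p-r$ is bounded on the collar $X_\delta$, and (ii) the curvature condition \eqref{1.3} (quoting Lemma~4.1 of \cite{li2017gap} and Lemma~3.1 of \cite{dutta2010rigidity}) to see that $|d(s_p-r)|_g$ is bounded near the boundary. Hence $s_p-r$ extends Lipschitz to $\overline{X}$, the conformal metric $g^p=e^{-2s_p}g^+=e^{2(r-s_p)}g$ is a Lipschitz metric on $\overline{X}$, and the geodesic spheres $\partial B_p(t)$, read in $g^p$, are Lipschitz graphs over $\partial X$ converging to it. This immediately gives
\[
\lim_{t\to\infty}e^{-nt}\,Vol(\partial B_p(t),g^+)=\lim_{t\to\infty}Vol(\partial B_p(t),g^p)=Vol(\partial X,\hat g^p),
\]
so the limit is not only shown to exist but is \emph{identified} as a boundary volume. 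That identification is precisely what the rest of the paper exploits (the formula $\mathcal A(p)=2^nVol(\partial X,\hat g^p)$ drives the Lipschitz continuity of $\ln\mathcal A$, Lemma~3.1, and the estimates on $\delta$).

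Your Riccati/dominated-convergence argument is self-contained and avoids the graph/extension step altogether, but it produces only an abstract limit $\int_{S^n}\lim_{t}e^{a(t,\theta)}\,d\theta$; connecting this to $Vol(\partial X,\hat g^p)$ would require further work (essentially recovering the boundary trace of $s_p-r$ along each ray). Two small points worth tightening: the uniform upper bound on $A(t,\theta)/\sinh^n t$ for $t$ in a fixed compact range follows from global Rauch comparison, since a $C^2$ conformally compact metric has globally bounded sectional curvature; and the claim that every minimizing ray eventually lies in $X_\delta$ is simply the observation that $X\setminus X_\delta$ is compact, so $s_p$ is bounded there and $\gamma_\theta(t)\in X_\delta$ once $t$ exceeds that bound, after which $|s_p-r|\le C$ gives $x(\gamma_\theta(t))\le Ce^{-t}$ and hence the uniform curvature decay you need.
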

Here we illustrate that the original thought is from Shi and Tian (lemma 2.2 in \cite{shi2005rigidity}) where they studied the rigidity of
the ALH manifold with an intrinsic geometric condition. By analyzing the geodesic sphere of radius $\rho$ and centered at a fixed pole $o,$ they
showed that $e^{-2\rho}g^+$ is uniformly equivalent to the flat metric $\delta_{ij}$ and hence $\frac{Vol(\partial
B_{g^+}(o,\rho))}{\sinh^n\rho}$ is uniformly bounded. Furthermore, the curvature  and diameter of $(\partial
B_{g^+}(o,\rho),\sinh^{-2}\rho g^+)$ are also bounded. Then they proved that $(\partial B_{g^+}(o,\rho),\sinh^{-2}\rho g^+)$ converges in
the weakly $W^{2,p}$-topology to a manifold which is conformally equivalent to a sphere and finally gets a rigidity result. Later the research was developed by Dutta and Javaheri in \cite{dutta2010rigidity} and they assumed that the conformal infinity of the AH manifold is
the standard round sphere metric. They compared the distance functions $t$ and $r=-\ln\frac{x}{2}$ of $g_+$ and showed that $t-r$ has a
continuous extension to the boundary $\mathbb{S}^n$ and $(\Sigma_r=\{y:r(y)=r\},4e^{-2t}g^+)$ converges in the weak $W^{1,2}$ topology to some metric on the sphere. Finally, by computing the volume of the hypersurface they obtained the rigidity result. A more general case was studied by Li-Shi-Qing in \cite{li2017gap} and they get a relative volume inequality for AHE manifolds. This inequality also provides a new curvature pinching estimate for AHE manifolds with conformal infinities having large Yamabe invariants.
\par It is natural to ask: whether the limit in lemma 1.1 is dependent on the choice of the base point $p?$ As shown in example \ref{ex4.2}, the answer is yes, even for hyperbolic manifold. Then it is meaningful to introduce the following definition:
\begin{definition}
Suppose that $(X,g^+)$ is an $n+1-$ dimensional AH manifold of order 2, then for any $p\in X,$ we define
\begin{equation}\label{1.4}
  \mathcal{A}(p)=2^n\cdot\lim\limits_{t\rightarrow+\infty}e^{-nt}\cdot Vol(\partial B_{g^+}(p,t))
\end{equation}
we call $\mathcal{A}$  the relative volume function on $X.$
\end{definition}
We remark that condition (\ref{1.3})  is a sufficient and unnecessary condition for the existence of the limit. In fact, if $Ric[g^+]\geq ng^+,$ then we can still define the relative volume function because of the volume comparison theorem.
\\ \par In the first part of this paper, we will study the property of $\mathcal{A}.$ Firstly, we use the triangle inequality to prove that $\ln\mathcal{A}$ is a Lipschitz continuous function. Then we show that there is a relationship between $\mathcal{A}$ and the 'height' of the geodesic defining function.
More concretely,
\begin{theorem}
  Let $(X^{n+1},g^+)$ be an $n+1-$ dimensional AH manifold of order 2, then we have the following:\\
  (1) The function$\mathcal{A}$ is positive on $X$ and $\ln\mathcal{A}$ is Lipschitz continuous on $X$ in the sense that
\begin{equation}\label{1.5}
  |d \ln\mathcal{A}|_{g^+}\leq n\ \ i.e.\ \ |d \mathcal{A}|_{g^+}\leq n\mathcal{A}.
\end{equation}
  (2) Let $g=x^2g^+$ be the $C^2$ geodesic compactification of $g^+$ and the boundary metric of $g$ is $\hat{g}=g|_{\partial X}.$ If $|dx|^2_g=1$ and $x$ is smooth on $X_\delta=\partial X\times(0,\delta),$ then
  for any $p\in X\setminus X_\delta,$
\begin{equation}\label{1.6}
  \mathcal{A}(p)\leq(\frac{2}{\delta})^n\cdot Vol(\partial X,\hat{g})\leq e^{n\cdot diam(X\setminus X_\delta,g^+)}\mathcal{A}(p).
\end{equation}
 Here $diam(X\setminus X_\delta,g^+)$ denotes the diameter of $X\setminus X_\delta$ in $(X,g^+).$
\end{theorem}
We state that (\ref{1.6}) provides the above and lower bounds of $\delta$ which is the "height "of geodesic defining function. On one hand, if $Ric[g^+]\geq -ng^+,$ then we have the following estimates of lower bound in some sense.
\begin{equation}\label{1.7}
 \delta\cdot e^{diam(X\setminus X_\delta, g^+)}\geq 2\cdot(\frac{Vol(\partial X,\hat{g})}{\mathcal{A}(p)})^{\frac{1}{n}}\geq 2\cdot(\frac{Vol(\partial X,\hat{g})}{Vol(\mathbb{S}^n,g_{\mathbb{S}})})^{\frac{1}{n}}
\end{equation}
If the complement set of $X_\delta$  has a
uniformly bounded diameter, then the height of the geodesic function $\delta$ is bounded from below. This is very useful when we study the
compactness problems of AHE metrics.
\par On the other hand, let's first recall the classic relative volume inequality in \cite{li2017gap}.  The authors showed that
the  relative volume is uniformly bounded from below by the Yamabe constant of the conformal infinity. As an application of the relative volume inequality and (\ref{1.6}), we get the following estimates of upper bound.
\begin{corollary}
  Let $(X^{n+1}, g^+)$ be an AH manifold of $C^3$ regularity whose
conformal infinity $(\partial X,[\hat{g}])$ is of positive Yamabe type. Let $p\in X^{n+1}$ be a fixed point. Assume
further that
\begin{equation} \label{1.8}
   Ric[g^+] \geq -ng^+ \ \ and \ \ R[g^+] + n(n +1) = o(e^{-2t})
\end{equation}
for the distance function t from p. Let $g=x^2g^+$ be a $C^2$ geodesic compactification with boundary metric $\hat{g}$ and $\delta$  be the supremum of $x$ where $|dx|_g^2=1$ and $x$ is smooth on $\partial X\times(0,\delta).$
 Then we have the upper bound estimate of $\delta:$
\begin{equation} \label{1.9}
  \delta\leq 2\cdot (\frac{Vol(\partial X,\hat{g})}{Vol(\mathbb{S}^n,g_{\mathbb{S}})})^{\frac{1}{n}}\cdot(\frac{Y(\mathbb{S}^n,[g_{\mathbb{S}}])}{Y(\partial X,[\hat{g}])})^{\frac{1}{2}}
\end{equation}
\end{corollary}
The inequalities (\ref{1.7}) and (\ref{1.9}) are sharp. If we consider the hyperbolic space: (Poincar\'e ball model)
$$(\mathbb{B}^{n+1},\ g_\mathbb{H}=(\frac{2}{1-|x|^2})^2|dx|^2),$$
where $\rho=\frac{1-|x|^2}{2}$ is defining function.  Let $s=2\frac{1-|x|}{1+|x|},$ then
$$g_\mathbb{H}=s^{-2}(ds^2+\frac{(4-s^2)^2}{16}g_{\mathbb{S}^n}),$$
where $s$ is the geodesic defining function. Hence the conformal infinity is the standard conformal round sphere $(\mathbb{S}^n,[g_{\mathbb{S}^n}]).$
We find that $\delta=2$ and the equality in (\ref{1.7}) and (\ref{1.9}) holds.
\par We also consider the asymptotical behavior of the relative volume function and show that $\mathcal{A}(p_0)$ is uniformly bounded from below by a constant only depending on the dimension $n+1$ when $p_0$ approaches to infinity.
\begin{theorem}
   Let $(X^{n+1},g^+)$ be an $n+1-$ dimensional AH manifold of order 2, then
  \begin{equation}\label{1.10}
    \liminf\limits_{p_0\rightarrow \partial X}\mathcal{A}(p_0)\geq (\frac{2}{n})^n(n-1)!\omega_{n-1}.
  \end{equation}
  As an application, the function $\mathcal{A}$ is uniformly bounded from below by a positive number.
\end{theorem}

 \par In the second part of this paper. We do some calculations on some AH manifolds. Firstly, we show that the relative volume function on hyperbolic space is exactly the volume of the standard sphere $\omega_n$ and the inverse conclusion also holds provided the Ricci curvature is bounded from below. More concretely, we have the following rigidity result:
\begin{theorem}
  Let $(X,g^+)$ be an $n+1-$ dimensional AH metric and the Ricci curvature satisfies $Ric[g^+]\geq -ng^+$, then $\mathcal{A}\leq \omega_n$ and the equality holds at one point if and only if $X$ is isometric to a hyperbolic space.
\end{theorem}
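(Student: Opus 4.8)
The plan is to deduce the entire statement from the Bishop--Gromov volume comparison theorem, using only the hypothesis $Ric[g^+]\geq -ng^+$. Since $\dim X=n+1$, this curvature bound is precisely $Ric\geq(m-1)Kg$ with $m=n+1$ and $K=-1$, so the correct comparison space is the hyperbolic space $\mathbb{H}^{n+1}$ of constant curvature $-1$, whose geodesic spheres have area $\omega_n\sinh^n t$. The first step is to rewrite the relative volume function in terms of the normalized sphere-area ratio. Since $2^ne^{-nt}=(1-e^{-2t})^n/\sinh^n t$ and $(1-e^{-2t})^n\to 1$, the definition (\ref{1.4}) gives
\begin{equation}
\frac{\mathcal{A}(p)}{\omega_n}=\lim_{t\to+\infty}\frac{Vol(\partial B_p^{g^+}(t),g^+)}{\omega_n\sinh^n t}.
\end{equation}
Thus the theorem is equivalent to showing that this limiting area ratio is $\leq 1$, with equality characterizing hyperbolic space.

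Next I would run the standard Riccati/area-element argument. Writing $g^+$ in geodesic polar coordinates about $p$ as $dVol=A(t,\theta)\,dt\,d\theta$ with $\theta\in\mathbb{S}^n\subset T_pX$, the Riccati comparison under $Ric[g^+]\geq -ng^+$ shows that $A(t,\theta)/\sinh^n t$ is non-increasing in $t$ along each radial geodesic (setting $A\equiv 0$ past the cut point, which preserves monotonicity) and tends to $1$ as $t\to 0^+$. Integrating over $\theta$ and dividing by $\omega_n=Vol(\mathbb{S}^n,g_{\mathbb{S}})$ shows that
\begin{equation}
t\mapsto V(t):=\frac{Vol(\partial B_p^{g^+}(t),g^+)}{\omega_n\sinh^n t}
\end{equation}
is non-increasing with $\lim_{t\to 0^+}V(t)=1$. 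Hence $V(t)\leq 1$ for all $t>0$, and passing to the limit yields $\mathcal{A}(p)/\omega_n\leq 1$, i.e. $\mathcal{A}\leq\omega_n$ on $X$.

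For the rigidity statement, suppose $\mathcal{A}(p)=\omega_n$ at some point $p$, so $\lim_{t\to\infty}V(t)=1$. Because $V$ is non-increasing with $V(0^+)=1$, this forces $V(t)\equiv 1$ for all $t$, and therefore $A(t,\theta)=\sinh^n t$ for almost every direction $\theta$ and every $t$, with an empty cut locus (any cut point would strictly drop the ratio). Equality in the Riccati comparison then forces the shape operator of every geodesic sphere centered at $p$ to be $\coth(t)$ times the identity, which in turn forces all radial sectional curvatures to equal $-1$. Consequently $g^+=dt^2+\sinh^2 t\,g_{\mathbb{S}^n}$ in polar coordinates; since $g^+$ is complete and the cut locus is empty, $\exp_p$ is a diffeomorphism onto $X$, and $(X,g^+)$ is globally isometric to $\mathbb{H}^{n+1}$. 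The converse direction, that hyperbolic space realizes $\mathcal{A}\equiv\omega_n$, follows from the direct computation $Vol(\partial B_o^{\mathbb{H}}(t),g_{\mathbb{H}})=\omega_n\sinh^n t$.

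The analytically routine part is the monotonicity, which is textbook Bishop--Gromov. The main obstacle is the equality case: one must justify that pointwise equality of the area ratio for all $t$ genuinely propagates through the Riccati equation to constant sectional curvature $-1$ (rather than merely pinning the Ricci trace), and one must rule out the cut locus to upgrade the local polar-coordinate isometry to a global isometry with $\mathbb{H}^{n+1}$. Care is also needed to confirm that equality at a single point $p$ suffices; here this is automatic, since the monotone ratio $V$ already attains its maximal value $1$ at $t\to 0^+$, so equality of the limit forces equality for every $t$.
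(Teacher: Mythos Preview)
Your proof is correct and follows essentially the same approach as the paper: both invoke the Bishop--Gromov sphere-area comparison to get that $V(t)=Vol(\partial B_p^{g^+}(t))/Vol(\partial B_o^{\mathbb{H}}(t))$ is non-increasing with $V(0^+)=1$, deduce $\mathcal{A}(p)\leq\omega_n$ from the limit, and obtain rigidity by sandwiching $1=\lim_{t\to\infty}V(t)\leq V(t)\leq V(0^+)=1$ to force $V\equiv 1$ and then invoke the equality case of Bishop--Gromov. Your write-up simply unpacks the Riccati mechanism and the cut-locus argument behind the rigidity step in more detail than the paper, which cites the standard rigidity statement directly.
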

We also calculate the relative volume function on some hyperbolic manifolds and obtain that the relative volume functions are not constants on these spaces. See example \ref{ex4.2}.
\\
\par In the third part of paper, we provide an application of the relative volume function. Recall the work in \cite{bray2008capacity}, where the authors derive an upper bound for the capacity of an asymptotically flat 3-manifold with nonnegative scalar curvature and also showed a rigidity result, i.e. the upper bound is achieved if and only if $(M^3,g)$ is isometric to a spatial Schwarzschild manifold.
\par In this paper, we study the capacity of an AH manifold. Since the pivotal of an AH manifold is that it has a good structure at infinity, i.e. a compactification, then we will study the behavior of the capacity at infinity. Using the relative volume function, we give an upper bound of the capacity of balls in AH manifold when its radius goes to infinity. More concretely, it is easy to calculate the capacity of balls in hyperbolic space, it has the formula (\cite{grigor1999isoperimetric}):
\begin{equation}\label{1.11}
 \lim\limits_{t\rightarrow+\infty}\frac{cap_p(B_\mathbb{H}(o,t))}{e^{nt}}=\frac{1}{2^n}(\frac{n}{p-1})^{p-1}\omega_n.
\end{equation}
Noticing that $\omega_n$ is the value of the relative volume function on hyperbolic space.  We prove that the right side of (\ref{1.11}) is the upper bound of an AH manifold with bounded Ricci curvature from below and we also get a rigidity result. Besides, thanks to the work in \cite{hijazi2020cheeger}, if the AH manifold satisfies the conditions of (\ref{1.8}), we could give the accurate expression of the capacity at infinity. That is,
\begin{theorem}
  Let $(X,g^+)$ be an be an $n+1-$ dimensional AH manifold of $C^{3,\alpha}$ regularity ($\alpha\in (0,1)).$ If the Ricci curvature satisfies $Ric[g^+]+ng^+\geq 0,$ then for any $p>1$ and $q\in X,$
\begin{equation}\label{1.12}
\limsup\limits_{t\rightarrow+\infty}\frac{cap_p(B_{g^+}(q,t))}{e^{nt}}\leq\frac{1}{2^n}(\frac{n}{p-1})^{p-1}\omega_n
\end{equation}
The equality holds if and only if  $(X,g^+)$ is isometric to a hyperbolic space.
\\
   If in addition, the conformal infinity is of nonnegative Yamabe constant and the scalar curvature satisfies $R[g^+]+n(n+1)=o(x^2)$ where $x$ is a defining function. Then
\begin{equation}\label{1.13}
  \lim\limits_{t\rightarrow+\infty}\frac{cap_p(B_{g^+}(q,t))}{e^{nt}}=\frac{1}{2^n}(\frac{n}{p-1})^{p-1}\mathcal{A}(q).
\end{equation}
   for any $p>1$ and $q\in X.$
\end{theorem}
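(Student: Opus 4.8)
The plan is to sandwich $cap_p(B_q^{g^+}(t))$ between a radial test-function upper bound and a coarea--isoperimetric lower bound, reading off the constants from the asymptotics of the area function $A(s):=Vol(\partial B_q^{g^+}(s),g^+)$. For the upper bound I would feed a radial competitor $u=\varphi(r)$, $r=d_{g^+}(q,\cdot)$, with $\varphi(t)=1$ and $\varphi\to 0$ at infinity, into the $p$-Dirichlet energy; since $|\nabla r|_{g^+}=1$ almost everywhere, the coarea formula gives $\int_X|\nabla u|_{g^+}^p\,dV=\int_t^\infty|\varphi'(s)|^pA(s)\,ds$, and minimizing over $\varphi$ yields
\[
cap_p(B_q^{g^+}(t))\le\left(\int_t^\infty A(s)^{-\frac1{p-1}}\,ds\right)^{-(p-1)}.
\]
Under $Ric[g^+]\ge -ng^+$ the Bishop--Gromov monotonicity guarantees that $e^{-ns}A(s)\to\mathcal{A}(q)/2^n$, so a squeeze of the tail integral gives $\limsup_{t\to\infty}e^{-nt}cap_p(B_q^{g^+}(t))\le\frac1{2^n}\left(\frac n{p-1}\right)^{p-1}\mathcal{A}(q)$.

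Since Theorem 1.5 gives $\mathcal{A}(q)\le\omega_n$, this already proves (\ref{1.12}). For the rigidity I would note that the upper estimate is in fact the chain $\limsup\le\frac1{2^n}(\frac n{p-1})^{p-1}\mathcal{A}(q)\le\frac1{2^n}(\frac n{p-1})^{p-1}\omega_n$; equality in (\ref{1.12}) forces $\mathcal{A}(q)=\omega_n$, and the rigidity half of Theorem 1.5 then forces $(X,g^+)$ to be hyperbolic, the converse being the explicit computation (\ref{1.11}).

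For the sharp limit (\ref{1.13}) I need a matching lower bound, which does not follow from $Ric\ge -ng^+$ alone. The plan is to invoke the Maz'ya capacity--isoperimetry inequality: applying the coarea formula and H\"older's inequality to the level sets of an arbitrary admissible $u$ produces
\[
cap_p(B_q^{g^+}(t))\ge\left(\int_{v_0(t)}^\infty I(v)^{-\frac p{p-1}}\,dv\right)^{-(p-1)},\qquad v_0(t):=Vol(B_q^{g^+}(t)),
\]
where $I$ is the isoperimetric profile of $(X,g^+)$. Integrating the area asymptotics gives $v_0(t)\sim\frac{\mathcal{A}(q)}{2^nn}e^{nt}$, so if the \emph{sharp} linear isoperimetric inequality $\liminf_{v\to\infty}I(v)/v\ge n$ holds, then splitting the tail integral and letting the slack tend to zero yields $\liminf_{t\to\infty}e^{-nt}cap_p(B_q^{g^+}(t))\ge\frac1{2^n}(\frac n{p-1})^{p-1}\mathcal{A}(q)$, which matches the upper bound and gives (\ref{1.13}).

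The hard part is precisely the sharp Cheeger-type bound $\liminf_{v\to\infty}I(v)/v\ge n$: the rate $n$ is the hyperbolic value, and keeping the isoperimetric constant at infinity from dropping below it is exactly what the extra hypotheses buy. The nonnegativity of the Yamabe invariant of $(\partial X,[\hat g])$ together with the scalar curvature decay $R[g^+]+n(n-1)=o(x^2)$ are the conditions under which \cite{hijazi2020cheeger} supplies this sharp isoperimetric estimate at infinity, and I would quote it as a black box. The remaining steps are routine bookkeeping: justifying the coarea and H\"older manipulations for merely Lipschitz competitors, controlling the $o(1)$ terms uniformly in the tail integrals via the Bishop--Gromov squeeze, and tracking that every asymptotic carries the pointwise constant $\mathcal{A}(q)$ rather than $\omega_n$, so that the possible non-constancy of $\mathcal{A}$ is faithfully reflected in (\ref{1.13}).
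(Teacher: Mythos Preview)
Your proposal is correct and matches the paper's proof essentially line for line: the radial test function plus H\"older to get $cap_p(B(t))\le\bigl(\int_t^\infty A(s)^{1/(1-p)}\,ds\bigr)^{1-p}$, the Bishop--Gromov asymptotics of $A(s)$ to extract the constant $\frac{1}{2^n}(\frac{n}{p-1})^{p-1}\mathcal A(q)$, Theorem~1.6 (not 1.5) for $\mathcal A(q)\le\omega_n$ and rigidity, and Maz'ya's isocapacitary inequality together with \cite{hijazi2020cheeger} for the matching lower bound. The only cosmetic difference is that the paper quotes \cite{hijazi2020cheeger} as giving the \emph{global} Cheeger constant $\mathcal Ch(X,g^+)=n$ (hence $I(\tau)\ge n\tau$ for all $\tau$), which yields directly $cap_p(B(t))\ge \frac{n^p}{(p-1)^{p-1}}Vol(B(t))$ without the tail-splitting you describe; since that is in fact what the reference supplies, you can streamline your lower-bound step accordingly.
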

Theorem 1.7 is exactly an extension of (\ref{1.11}) to an AH manifold.
\par In the end, we give an estimate of the capacity of balls in a general AH manifold without any other curvature condition. i.e.
\begin{theorem}
  Let $(X,g^+)$ be an $n+1-$ dimensional AH manifold of $C^2$ regularity, then for any $q\in X,p>1,$
\begin{equation}\label{1.14}
cap_p(B_{g^+}(q,t))=O(e^{nt}),\ \ t\rightarrow+\infty.
\end{equation}
\end{theorem}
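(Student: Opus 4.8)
The $p$-capacity $cap_p(B_q^{g^+}(t))$ is, by definition, the infimum of $\int_X |\nabla u|_{g^+}^p\, dV_{g^+}$ over admissible test functions $u$ with $u\geq 1$ on $B_q^{g^+}(t)$ and $u\to 0$ at infinity. Since we only want an \emph{upper} bound, it suffices to exhibit one good test function whose $p$-energy is $O(e^{nt})$; no curvature hypothesis should be needed, which is consistent with Theorem 1.7 giving the sharp constant only under $Ric[g^+]\geq -ng^+$. The plan is to build the test function out of the geodesic defining function, so that the whole problem reduces to the elementary growth rate of the volumes of the level sets near the conformal infinity.

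Fix the geodesic defining function $x$ on a collar $X_\delta=\partial X\times(0,\delta)$, so that $g^+=x^{-2}(dx^2+g_x)$ with $|dx|_g=1$ for $g=x^2g^+$. I would first record two facts. Because $|dx|_{g^+}^2=x^2|dx|_g^2=x^2$, the function $f:=-\log x$ satisfies $|\nabla f|_{g^+}=1$ on $X_\delta$, and its level sets $\{f=c\}=\{x=e^{-c}\}$ are exactly the level sets of $x$. Moreover, the metric induced on $\{x=s\}$ is $s^{-2}g_s$, so
\[
Vol(\{x=s\},g^+)=s^{-n}\,Vol(\partial X,g_s).
\]
Since $g=x^2g^+$ is $C^2$ up to the boundary and $\overline X$ is compact, the geodesic compactification $dx^2+g_x$ is $C^1$, hence $s\mapsto Vol(\partial X,g_s)$ is continuous on $[0,\delta]$ and bounded by some $M$. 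This gives the single geometric input $Vol(\{f=c\},g^+)\leq M e^{nc}$, valid for all $c$ large and requiring only $C^2$ conformal compactness.

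Next I would compare the distance from $q$ with $f$. Since $|\nabla f|_{g^+}=1$ on the collar, along any path $\gamma$ the value $f(\gamma(s))$ grows at rate at most the speed of $\gamma$ wherever $\gamma$ lies in $X_\delta$. Given $y$ with $x(y)<\delta/2$, let $s_0$ be the \emph{last} time $\gamma$ meets $\{x=\delta/2\}$; after $s_0$ the curve stays in the collar, so $f(y)-f(\gamma(s_0))\leq \mathrm{length}(\gamma)$, and taking the infimum over paths yields
\[
f(y)\leq d_{g^+}(q,y)+C_0,\qquad C_0=C_0(q,\delta).
\]
Consequently $B_q^{g^+}(t)\subseteq\{f\leq t+C_0\}$ for all large $t$ (the compact core $\{x\geq\delta\}$ sits inside $\{f\leq-\log\delta\}$). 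This last-exit argument is the delicate point of the proof: it is exactly what sidesteps the cut locus and the possibility that a minimizing geodesic leaves and re-enters the collar, and it is where I expect essentially all of the (still modest) work to lie.

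Finally I would take the profile $\psi(c)=1$ for $c\leq t+C_0$ and $\psi(c)=e^{-\frac{n}{p-1}(c-t-C_0)}$ for $c\geq t+C_0$, set $u=\psi(f)$ on the collar and $u\equiv 1$ on the core, and truncate $\psi$ at a large radius (letting it tend to infinity) to secure compact support. Then $u\geq 1$ on $B_q^{g^+}(t)$, and since $\nabla u$ is supported where $|\nabla f|_{g^+}=1$, the coarea formula gives
\[
\int_X |\nabla u|_{g^+}^p\, dV_{g^+}=\int_{t+C_0}^{\infty}|\psi'(c)|^p\,Vol(\{f=c\},g^+)\,dc\leq M\Bigl(\tfrac{n}{p-1}\Bigr)^p\int_{t+C_0}^{\infty}e^{-\frac{np}{p-1}(c-t-C_0)}e^{nc}\,dc.
\]
Evaluating the last integral produces $M\bigl(\tfrac{n}{p-1}\bigr)^{p-1}e^{n(t+C_0)}$, whence $cap_p(B_q^{g^+}(t))\leq M e^{nC_0}\bigl(\tfrac{n}{p-1}\bigr)^{p-1}e^{nt}=O(e^{nt})$. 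The exponential profile is precisely the minimizer of the model capacity problem for the comparison density $e^{nc}$, which is why it reproduces the correct growth order; recovering the sharp constant (as in Theorem 1.7) would instead require replacing the crude bound $Vol(\{f=c\},g^+)\leq Me^{nc}$ by its asymptotics, which is where the curvature hypotheses enter there.
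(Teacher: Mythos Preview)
Your proof is correct and follows essentially the same approach as the paper: both compare $s_q$ with $r=-\log x$ to get $B_q^{g^+}(t)\subseteq\{x\geq e^{-(t+C_0)}\}$, then build a radial test function in the $x$-variable and control its energy via the level-set volumes $Vol(\{x=s\},g^+)=s^{-n}Vol(\partial X,g_s)$. The only cosmetic difference is that the paper optimizes the profile via H\"older (obtaining the explicit limsup bound in terms of $Vol(\partial X,\hat g)$), whereas you plug in the exponential profile directly; for the $O(e^{nt})$ statement these are equivalent.
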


The paper is organized as follows. In section 2, we first review the key steps in \cite{dutta2010rigidity} and \cite{li2017gap} proving lemma 1.1. and give a method to compute the function $\mathcal{A}$ in (\ref{2.4}). Then with the help of triangle inequality, we show that
 $\ln\mathcal{A}$ is Lipschitz continuous.
\par In order to prove theorem 1.3 (2), we introduce the concept of the relative volume function acting on a compact set in section 3. Then by studying the relationship between the two functions, we could obtain the upper and lower estimate of $\delta.$ In the end, we prove theorem 1.5.
\par In section 4 we compute the relative volume functions on some AH manifolds. Firstly, we prove that $\mathcal{A} \equiv \omega_n$ on $n+1-$dimensional hyperbolic space. Afterwards, we do some calculations on hyperbolic manifolds and get the expression of $\mathcal{A}.$ In particular, the relative volume function $\mathcal{A}$ of hyperbolic manifolds  $\mathcal{A}<\omega_n.$  And then a rigidity result is proved when the Ricci curvature is greater than or equal to $-n.$ We also prove that the relative volume function $\mathcal{A}$ is not a constant function on hyperbolic manifolds with small $\lambda>0.$
\par In sections 5 and 6, we prove theorem 1.7 and 1.8. The main techniques are classic and are from  Maz’ya's work.
\section{The Lipschitz property of the relative volume function}
We will first give a short proof of the existence of the relative volume fnction $\mathcal{A}(q),$ where more details can be found in \cite{dutta2010rigidity} and \cite{li2017gap}.
\par Let $p\in X$ and $B(t)=B_{g^+}(p,t)$ be the geodesic sphere in $(X,g^+)$ centered at $p$ and set
 \begin{equation}\label{2.1}
   s_p(\cdot)=d_{g^+}(p,\cdot)
 \end{equation}
 be the distance function from $p.$ We state that the symbol $\nabla$ and $|.|$ are the Levi-Civita connection and the norm with respect to the metric $g^+.$ Let $g=x^2g^+$  be the geodesic compactification on $X_\delta=\partial X\times(0,\delta)$ for some small $\delta.$
 We set $r=-\ln x,$ then $|\nabla r|=|\nabla^gx|_g=1$ on $X_\delta,$ which means that $r$ is a distance function on $(X_\delta,g^+).$ By triangle inequality we obtain that $s_p-r$ is bounded on $X_\delta.$ Moreover, if the curvature satisfies condition \ref{1.3} ,then $|d(r-s_p)|_g$ is also bounded when $r$ is large enough and $s_p$ is smooth. (Lemma 4.1 in \cite{li2017gap} and lemma 3.1 in \cite{dutta2010rigidity}).
\par  Consider the conformal metric

\begin{equation}\label{2.2}
  g^p=e^{-2s_p}g^+=e^{2(r-s_p)}g
\end{equation}
on $X_\delta,$ then $g^p$ is Lipschitz continuous to the boundary. Let $\hat{g}^p=g^p|_{\partial X}$ be the boundary metric of $g^p.$ If we regard $\partial B(t)$ as a Lipschitz graph over $\partial X$ in $(X,g^p),$ we could obtain that
\begin{equation}\label{2.3}
\lim\limits_{t\rightarrow+\infty}Vol(\partial B(t),g^p|_{\partial B(t)})=Vol(\partial X,\hat{g}^p)
\end{equation}
    Then by the definition,
\begin{equation}\label{2.4}
\begin{aligned}
\mathcal{A}(p)&=2^n\cdot\lim\limits_{t\rightarrow+\infty}e^{-nt}Vol(\partial B(t),g^+)\\
&=2^n\cdot\lim\limits_{t\rightarrow+\infty}Vol(\partial B(t),g^p|_{\partial B(t)})
\\&=2^n\cdot Vol(\partial X,\hat{g}^p)
\end{aligned}
\end{equation}
Hence $\mathcal{A}(p)>0$ for any $p\in X.$
\par For any $p,q\in X,$ we use $s_p$ and $s_q$ to denote the distance function from $p$ and $q$ respectively in $(X,g^+).$
 Set
\begin{equation}\label{2.5}
  g^p=e^{-2s_p}g^+\ \ and\ \ g^q=e^{-2s_q}g^+
\end{equation}
 Suppose  $d=d_{g^+}(p,q),$ then $|s_p(\cdot)-s_q(\cdot)|\leq d.$
We could use (\ref{2.4}) to get that
\begin{equation}\label{2.6}
\begin{aligned}
\mathcal{A}(p)&=2^n\cdot Vol(\partial X,\hat{g}^p)=2^n\int_{\partial X}dV_{\hat{g}^p}\\
&=2^n\int_{\partial X}e^{-n(s_p-s_q)}dV_{\hat{g}^q}
\end{aligned}
\end{equation}
Hence
\begin{equation}\label{2.7}
e^{-nd}\mathcal{A}(q)\leq\mathcal{A}(p)\leq e^{nd}\mathcal{A}(q)
\end{equation}
It means that
\begin{equation}\label{2.8}
  \frac{|\ln\mathcal{A}(p)-\ln\mathcal{A}(q)|}{d_{g^+}(p,q)}\leq n.
\end{equation}
Therefore $\ln\mathcal{A}$ is Lipschitz continuous.
\\ \par The proof above uses the property that $s_p-s_q$ has a continuous extension to the boundary because the curvature satisfies the condition $(\ref{1.3}).$ In the following, we provide another method to show that $\ln\mathcal{A}$ is still Lipschitz continuous without $(\ref{1.3})$ as long as $\mathcal{A}(p)$ is well defined. The Lipschitz constant is $n+1$ in this case.
\par Firstly, in $X,$ we have that
\begin{equation}\label{2.9}
  \det g^p=e^{-2(n+1)s_p}\det g^+=e^{-2(n+1)(s_p-s_q)}\det g^q\leq e^{2(n+1)d}\det g^q
\end{equation}
 Let $B(t)=B_{g^+}(p,t)$ and $D(t)=B_{g^+}(q,t),$ then for any $t>0$ big enough,
\begin{equation}\label{2.10}
  B(t)\subseteq D(t+d)\ \ \ \ D(\frac{t}{2})\subseteq B(\frac{t}{2}+d)
\end{equation}
 Hence
\begin{equation}\label{2.11}
  \begin{aligned}
Vol(B(t)\setminus B(\frac{t}{2}+d),g^p)&=\int_{B(t)\setminus B(\frac{t}{2}+d)}dV_{g^p}\leq e^{(n+1)d}\int_{B(t)\setminus B(\frac{t}{2}+d)}dV_{g^q}\\
&=e^{(n+1)d}\cdot Vol(B(t)\setminus B(\frac{t}{2}+d),g^q)
\\ &\leq e^{(n+1)d}\cdot Vol(D(t+d)\setminus D(\frac{t}{2}),g^q).
\end{aligned}
\end{equation}
Which implies
\begin{equation}\label{2.12}
  \int_{\frac{t}{2}+d}^t Vol(\partial B(u),g^p)du\leq e^{(n+1)d} \int_{\frac{t}{2}}^{t+d} Vol(\partial D(u),g^q)du.
\end{equation}
Integral mean value theorem tells us that $\exists\xi_t\in[\frac{t}{2}+d,t]$ and $\exists\eta_t\in[\frac{t}{2},t+d]$
such that
\begin{equation}\label{2.13}
 (\frac{t}{2}-d)\cdot Vol(\partial B(\xi_t),g^p)\leq  e^{(n+1)d}\cdot(\frac{t}{2}+d)\cdot Vol(\partial D(\eta_t),g^q)
\end{equation}
Multiply both sides by $\frac{2}{t}$ and let $t$ tends to infinity, we deduce that
\begin{equation}\label{2.14}
  \mathcal{A}(p)\leq e^{(n+1)d}\cdot\mathcal{A}(q)
\end{equation}
We could also prove that $\mathcal{A}(q)\leq e^{(n+1)d}\cdot\mathcal{A}(p)$ in the same method. Hence  $\ln\mathcal{A}$ is Lipschitz continuous and the Lipschitz constant is $n+1.$

\section{Estimates of $\delta:$ the height of the geodesic defining function}
In order to get the upper and lower bounds of $\delta,$ we need to extend the definition of the function $\mathcal{A}.$
Let $E\subset X$ be a compact subset, we use $s_E$ to denote the distance function of $E,$ i.e.
\begin{equation}\label{3.1}
  \forall q\in X,\ \ s_E(q)=d_{g^+}(E,q)=\inf\limits_{y\in X}d_{g^+}(y,q).
\end{equation}
Recall the geodesic compactification $g=x^2g^+$ with boundary metric $\hat{g}$ and let $r=-\ln x$ near boundary. Similar to the beginning of section 2, we set the conformal change $g^E=e^{-2s_E}g^+$ on $X$  and could also prove that $|d(s_E-r)|_{g}$ is bounded  for sufficiently large $r$ almost everywhere. Hence $g^E$ is a Lipschitz continuous metric on $\overline{X}.$ Then we can also define the relative volume function of $E,$
\begin{equation}\label{3.2}
\begin{aligned}
  \mathcal{A}(E)&=2^n\cdot\lim\limits_{t\rightarrow+\infty}e^{-nt}Vol(\partial B_{g^+}(E,t),g^+)\\
&=2^n\cdot\lim\limits_{t\rightarrow+\infty}Vol(\partial B_{g^+}(E,t),g^E|_{\partial B_{g^+}(E,t)})\\
&=2^n\cdot Vol(\partial X,\hat{g}^E)
\end{aligned}
\end{equation}
 where $\hat{g}^E=g^E|_{\partial X}.$
\par Here is the relationship between $\mathcal{A}(p)$ and $\mathcal{A}(E)$:
\begin{lemma}
If $E$ is a compact subset of an AH manifold  $(X^{n+1},g^+),$ then
\begin{equation}\label{3.3}
  \forall p\in E,\mathcal{A}(p)\leq \mathcal{A}(E)\leq e^{n\cdot diam(E,g^+)}\mathcal{A}(p).
\end{equation}
Here $diam(E,g^+)=\sup\limits_{p,q\in E}d_{g^+}(p,q)$ denotes the diameter of $E.$
\end{lemma}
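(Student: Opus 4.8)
The plan is to mirror the argument that produced (\ref{2.6})--(\ref{2.7}), replacing the point $q$ by the compact set $E$ and using only an elementary comparison of the distance functions $s_p$ and $s_E$. First I would record the pointwise two-sided estimate
\[
s_E \leq s_p \leq s_E + diam(E,g^+) \qquad \text{on } X .
\]
The lower bound is immediate from the definition (\ref{3.1}): since $p \in E$, one has $s_E(\cdot) = \inf_{y\in E} dist_{g^+}(y,\cdot) \leq dist_{g^+}(p,\cdot) = s_p(\cdot)$. For the upper bound, choose $y \in E$ (almost) realizing the infimum defining $s_E(q)$; the triangle inequality then gives $s_p(q) \leq dist_{g^+}(p,y) + dist_{g^+}(y,q) \leq diam(E,g^+) + s_E(q)$. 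Hence $0 \leq s_p - s_E \leq diam(E,g^+)$ everywhere on $X$.

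Next I would compare the two Lipschitz compactifications. From (\ref{2.2}) and the definition $g^E = e^{-2s_E}g^+$ we have $g^p = e^{-2(s_p-s_E)} g^E$, so the induced $n$-dimensional boundary volume forms satisfy
\[
dV_{\hat{g}^p} = e^{-n(s_p-s_E)}\, dV_{\hat{g}^E} \qquad \text{on } \partial X .
\]
This is legitimate because $s_p - s_E = (s_p - r) - (s_E - r)$ extends continuously to $\overline{X}$: both $s_p - r$ and $s_E - r$ are bounded with almost-everywhere bounded differential, as recorded in Section 2 and in the setup preceding (\ref{3.2}). In particular the pointwise bound $0 \le s_p - s_E \le diam(E,g^+)$ survives on $\partial X$ by continuity.

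Finally, integrating over $\partial X$ and using (\ref{2.4}) together with (\ref{3.2}),
\[
\mathcal{A}(p) = 2^n \int_{\partial X} dV_{\hat{g}^p} = 2^n \int_{\partial X} e^{-n(s_p-s_E)}\, dV_{\hat{g}^E},
\]
and the boundary bound $e^{-n\,diam(E,g^+)} \le e^{-n(s_p-s_E)} \le 1$ yields at once $e^{-n\,diam(E,g^+)}\mathcal{A}(E) \le \mathcal{A}(p) \le \mathcal{A}(E)$, which is exactly (\ref{3.3}). The one step deserving care — and the only place any analysis enters — is the passage to the boundary: one must know that $g^E$ compactifies to a Lipschitz metric, so that the limit defining $\mathcal{A}(E)$ in (\ref{3.2}) really equals the boundary integral $2^n\,Vol(\partial X,\hat{g}^E)$. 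This is precisely the Lipschitz-graph fact already used to produce $\mathcal{A}(p)$, so no new input beyond the construction of $\mathcal{A}(E)$ is needed; everything else reduces to the elementary distance comparison above.
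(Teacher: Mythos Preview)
Your proposal is correct and follows essentially the same approach as the paper: both arguments compare $s_p$ with $s_E$ via the elementary bounds $s_E\le s_p\le s_E+diam(E,g^+)$, pass to the boundary metrics $\hat g^p=e^{-2(s_p-s_E)}\hat g^E$, and integrate. The only cosmetic difference is that you establish the two-sided bound up front and handle both inequalities simultaneously, whereas the paper treats them in sequence.
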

\begin{proof}
  By the definition of distance function, $\forall q\in X,s_p(q)\geq s_E(q).$ As $s_p-s_E$ is  Lipschitz continuous to the boundary,
  $(s_p-s_E)|_{\partial X}\geq 0.$ Noticing that
\begin{equation}\label{3.4}
  \hat{g}^p=e^{-2s_p}g^+=e^{-2(s_p-s_E)}\hat{g}^E,
\end{equation}
we obtain
\begin{equation}\label{3.5}
\begin{aligned}
  \mathcal{A}(p)&=2^n\int_{\partial X}dV_{\hat{g}^p}=2^n\int_{\partial X}e^{-n(s_p-s_E)}dV_{\hat{g}^E}\\
&\leq2^n\int_{\partial X}dV_{\hat{g}^E}=\mathcal{A}(E).
\end{aligned}
\end{equation}
The second inequality could be proved in the same way with the triangle inequality
\begin{equation}\label{3.6}
  s_p(q)\leq s_E(q)+diam(E,g^+),\ \ \ \forall q\in X.
\end{equation}
\end{proof}

Suppose that $g=x^2g^+$ is the $C^2$ geodesic compactification with boundary metric $\hat{g}$ and $|\nabla^{g} x|_g= 1$ on $X_\delta=\partial X\times[0,\delta),$ we need to explain that $\partial X\times\{\delta\}$ may not be homeomorphism to $\partial X$ as it may degenerate into a lower dimension manifold. However we always have that $\partial X\times\{\delta'\}$ is homeomorphism to $\partial X$ for all $\delta'<\delta.$ We are now going to prove that the estimate (\ref{1.6}) holds for all $\delta'<\delta.$
\par Let $E'=X\setminus X_{\delta'}$ be a compact subset. Hence
\begin{equation}\label{3.7}
  \forall p\in X\setminus X_{\delta}\subseteq E',\ \ \ \ \ \mathcal{A}(p)\leq \mathcal{A}(E')\leq e^{n\cdot diam(E',g^+)}\mathcal{A}(p)
\end{equation}

We define function $r:$ $r(\cdot)=\ln\frac{\delta'}{x(\cdot)}$ on $X_{\delta'}$ and $r=0$ on $E'.$
Then $r$ is continuous on $X$ and smooth on $X_{\delta'}.$
Since $E'$ is compact, for any $q\in X_{\delta'},$ there exists a point $q'\in \partial E'=\partial X\times\{\delta'\}$ such that
\begin{equation}\label{3.8}
  d_{g^+}(q,E')=d_{g^+}(q,q')
\end{equation}
For any smooth curve $\sigma:[0,l]\rightarrow X_{\delta'}$ with  $\sigma(0)=q$ and $\sigma(l)=q',$
\begin{equation}\label{3.9}
  \begin{aligned}
Length(\sigma,g^+)&=\int_0^l|\dot{\sigma}(t)|dt=\int_0^l|\nabla r|\cdot|\dot{\sigma}(t)|dt
\\ &\geq\int_0^l|g^+(\nabla r,\dot{\sigma})|dt=\int_0^l|(r\circ\sigma)'(t)|dt
\\ &\geq|\int_0^l (r\circ\sigma)'(t)dt|=|r(q')-r(q)|
  \end{aligned}
\end{equation}
(One can see Chapter 5 in \cite{petersen2006riemannian} for more details.) If we choose $\sigma$ as an integral curve for $\nabla r,$ then $Length(\sigma,g^+)=|r(q')-r(q)|.$
As a consequence,
\begin{equation}\label{3.10}
  d_{g^+}(q,q')=|r(q')-r(q)|=r(q)
\end{equation}
Then $d_{g^+}(q,E')=r(q),$ which means
\begin{equation}\label{3.11}
  r(\cdot)=s_{E'}(\cdot) \ \ on\ \ X.
\end{equation}
\par With the preparations above, we could compute $\mathcal{A}(E').$ We notice that near  the boundary,
\begin{equation}\label{3.12}
  g^{E'}=e^{-2s_{E'}}g^+=e^{-2r}g^+=\frac{x^2}{(\delta')^2}g^+=\frac{1}{(\delta')^2}g
\end{equation}
Therefore, $\hat{g}^{E'}=\frac{1}{(\delta')^2}\hat{g}$ and
\begin{equation}\label{3.13}
  \mathcal{A}(E')=2^n\cdot Vol(\partial X,\hat{g}^E)=(\frac{2}{\delta'})^n\cdot Vol(\partial X,\hat{g})
\end{equation}
Let $\delta'\rightarrow\delta$ and combine it with (\ref{3.7}), we get the estimate of $\delta$ in (\ref{1.6}). At last,
(\ref{1.7}) holds because of the rigidity result, i.e. theorem 1.6 (or theorem 4.5.)
\\
\par At the end of this section, we will prove theorem 1.5. Let's fix a boundary point $\hat{p}\in \partial X$ and select the boundary metric $\hat{g}$ satisfying that the height of the geodesic defining function is bigger than 1, i.e. $\delta>1.$ This always can be realized as we can make a scaling of $\hat{g}.$ Now set $E_1=X\setminus X_1$ and make an estimate of $\mathcal{A}(p_0)$ where $p_0=(\hat{p},x_0)$ for some $x_0\in (0,1)$ small enough. The first thing we need to do is to control $s_{p_0}(q)-s_{E_1}(q)$ when $q$ tends to the boundary $\partial X.$ Assume that $q=(\hat{q},x)$ for some $\hat{q}\in \partial X$ and $x\in (0,x_0)$ and let $q_0=(\hat{q},q_0),$ then
\begin{equation}\label{3.14}
\begin{aligned}
  s_{p_0}(q)-s_{E_1}(q)&=d_{g^+}(p_0,q)-d_{g^+}(q,E_1)
  \\&=d_{g^+}(p_0,q)-d_{g^+}(q,q_0)-d_{g^+}(q_0,E_1)
  \\&\leq d_{g^+}(p_0,q_0)+\ln x_0.
  \end{aligned}
\end{equation}
Let $\sigma (t):[0,1]\rightarrow\partial X$ be the segment connecting $\hat{p}$ and $\hat{q}$ on the boundary, i.e.
$\hat{g}(\dot{\sigma}(t),\dot{\sigma}(t))=d^2_{\hat{g}}(\hat{p},\hat{q})$ for any $t\in [0,1].$ Then define
$\gamma(t)=(\sigma(t),x_0):[0,1]\rightarrow X$ to be a line connecting $p_0$ and $q_0$ on $X\times\{x_0\}.$
Recall that $g^+=x^{-2}(dx^2+\hat{g}+O(x^2))$ near boundary, so
\begin{equation}\label{3.15}
\begin{aligned}
   d_{g^+}(p_0,q_0)&\leq \int_0^1\sqrt{g^+(\dot{\gamma}(t),\dot{\gamma}(t))}dt
   \\& \leq \int_0^1x_0^{-1}\sqrt{\hat{g}(\dot{\sigma}(t),\dot{\sigma}(t))(1+Cx_0^2)}dt
   \\&\leq \frac{d_{\hat{g}}(\hat{p},\hat{q})}{x_0}+C_1x_0.
  \end{aligned}
\end{equation}
Here $C_1$ is a constant depending only on $n$ and $Ric[\hat{g}].$ (\ref{3.14}) and (\ref{3.15}) imply that
 \begin{equation}\label{3.16}
 \begin{aligned}
 \mathcal{A}(p_0)&=2^n\int_{\partial X}dV_{\hat{g}^{p_0}}=2^n\int_{\partial X}e^{-n(s_{p_0}-s_{E_1})|_{\partial X}}dV_{\hat{g}^{E_1}}
 \\ &\geq (\frac{2}{x_0})^n\cdot e^{-nC_1x_0}\int_{\partial X}e^{-\frac{nd_{\hat{g}}(\hat{p},\cdot)}{x_0}}dV_{\hat{g}}
 \\ &\geq (\frac{2}{x_0})^n\cdot e^{-nC_1x_0}\int_{B_{\hat{g}}(\hat{p},R)}e^{-\frac{nd_{\hat{g}}(\hat{p},\cdot)}{x_0}}dV_{\hat{g}}
 \\ &=(\frac{2}{x_0})^n\cdot e^{-nC_1x_0}\int_0^R e^{-\frac{nr}{x_0}}\cdot Vol(\partial B_{\hat{g}}(\hat{p},r)) dr
\end{aligned}
\end{equation}
We choose a fixed small $R>0$ depending on $\hat{p}$ such that 
$$Vol(\partial B_{\hat{g}}(\hat{p},r))\geq \omega_{n-1} r^{n-1}-C_2r^{n+1}$$ for any $r\leq R$ and $C_2$ depends on $\hat{g}.$
Then
 \begin{equation}\label{3.17}
 \begin{aligned}
 \mathcal{A}(p_0)&\geq (\frac{2}{x_0})^n\cdot e^{-nC_1x_0}\int_0^R e^{-\frac{nr}{x_0}}\cdot (\omega_{n-1} r^{n-1}-C_2r^{n+1}) dr
 \\ &=(\frac{2}{x_0})^n\cdot e^{-nC_1x_0} \cdot (\frac{x_0}{n})^n\int_0^{\frac{nR}{x_0}}e^{-t}(\omega_{n-1}t^{n-1}-C_2(\frac{x_0}{n})^2t^{n+1})dt
\end{aligned}
\end{equation}
Let $x_0\rightarrow 0,$ we finally obtain that
 \begin{equation}\label{3.18}
\liminf\limits_{p_0\rightarrow\hat{p}} \mathcal{A}(p_0)\geq(\frac{2}{n})^n\int_0^{+\infty}e^{-t}\omega_{n-1}t^{n-1}dt=(\frac{2}{n})^n(n-1)!\omega_{n-1}
\end{equation}
Then we finish the proof of theorem 1.5 since $\hat{p}$ is arbitrarily selected.
\section{Examples and a rigidity result}
\begin{example}
The hyperbolic space:
\begin{equation}\label{4.1}
(\mathbb{H}^{n+1},g_\mathbb{H}=dt^2+\sinh^2tg_{\mathbb{S}^n})
\end{equation}
 where $g_{\mathbb{S}^n}$ is the standard metric on the $n-$sphere. Let $o$ be the centre point, then the distance function from o, $s_o=t.$
 So
\begin{equation}\label{4.2}
\mathcal{A}(o)=2^n\cdot\lim\limits_{t\rightarrow+\infty}e^{-nt}\omega_n\sinh^nt=\omega_n.
\end{equation}
Here $\omega_n$ denotes the volume of the standard unit sphere $\mathbb{S}^n.$
Since $\mathcal{A}$ is an invariant under the isometry transformation and ${H}^{n+1}$ is a homogeneous space, we derive that
$\mathcal{A}$ is a constant function on $\mathbb{H}^{n+1}$ and $\mathcal{A}\equiv \omega_n.$
\end{example}

\begin{example}\label{ex4.2}
Hyperbolic manifold:
\begin{equation}\label{4.3}
  (\mathbb{R}^n\times \mathbb{S}^1(\lambda),g^+=dt^2+\sinh^2tg_{\mathbb{S}^{n-1}}+\cosh^2tg_{\mathbb{S}^1(\lambda)}).
\end{equation}
Here $n\geq 2,$ and we use the polar coordinate system $\mathbb{R}^n=[0,+\infty)\times\mathbb{S}^{n-1}.$
The conformal infinity of $g^+$ is
$$(\mathbb{S}^{n-1}\times\mathbb{S}^1,[g_{\mathbb{S}^{n-1}}+g_{\mathbb{S}^1(\lambda)}]).$$
Then we have
\begin{equation}\label{4.4}
  \mathcal{A}(t_0,\cdot,\cdot)=\int_{\mathbb{S}^{n-1}\times\mathbb{S}^1(\lambda)}e^{nb_{\gamma_+}(t_0,w,\theta)}d\Theta_{n-1}d\Theta_\lambda.
\end{equation}
Here $d\Theta_{n-1}$ and $d\Theta_\lambda$  are the standard metric on the $n-1$ sphere $\mathbb{S}^{n-1}$ and $\mathbb{S}^1(\lambda).$
$\gamma_+$ is a geodesic ray starting at $(0_n,\theta)$ and $b_{\gamma_+}$ is the buseman function with respect to $\gamma_+.$
\begin{proof}
Let $E=\{0_n\}\times\mathbb{S}^1(\lambda)$ be a compact set where $0_n$ is the centre of $\mathbb{R}^n.$ We know that $t$ is the distance function from $E,$ i.e. $t=s_E.$ Let $g^E=e^{-2t}g^+,$ then
\begin{equation}\label{4.5}
  \hat{g}^E=g^E|_{\partial (\mathbb{R}^n\times \mathbb{S}^1(\lambda))}=\frac{1}{4}(g_{\mathbb{S}^{n-1}}+g_{\mathbb{S}^1(\lambda)}).
\end{equation}

Now we are going to calculate $\mathcal{A}(p_0)$ for some $p_0=(t_0,w_0,\theta_0)\in \mathbb{R}^n\times \mathbb{S}^1(\lambda).$
Firstly we need to study the distance function from $p_0.$ Let $y=(t,w,\theta)\in \mathbb{R}^n\times \mathbb{S}^1(\lambda)$
where $t\geq t_0.$ If we use $d$ to denote the distance function $d_{g^+}$,  then
\begin{equation}\label{4.6}
  d(p_0,y)=d((t_0,w,\theta),(t,w_0,\theta_0))
\end{equation}
We still use $s_{p_0}(\cdot)$ and $s_E(\cdot)$ to denote the distance function from $p_0$ and $E.$ Hence
\begin{equation}\label{4.7}
\lim\limits_{t\rightarrow+\infty}[s_E(y)-s_{p_0}(y)]=\lim\limits_{t\rightarrow\infty}
[t-d((t_0,w,\theta),(t,w_0,\theta_0))]=b_{\gamma_+}(t_0,w,\theta)
\end{equation}
where $\gamma_+(t)=(t,w_0,\theta_0),t\geq 0$ is a geodesic ray.
Then
\begin{equation}\label{4.8}
\begin{aligned}
  \mathcal{A}(p_0)&=2^n\int_{\partial X}dV_{\hat{g}^{p_0}}=2^n\int_{\partial X}e^{-n(s_{p_0}-s_E)|_{\partial X}}dV_{\hat{g}^E}\\
  &=\int_{\mathbb{S}^{n-1}\times\mathbb{S}^1(\lambda)}e^{nb_{\gamma_+}(t_0,w,\theta)}d\Theta_{n-1}d\Theta_\lambda.
  \end{aligned}
\end{equation}
By an orthogonal transformation, we find that $\mathcal{A}(p_0)$ is not depending on $(w_0,\theta_0),$ and hence it is a function of $t_0.$
\end{proof}
\begin{lemma}
If $t_0=0,$ then
\begin{equation}\label{4.9}
  \mathcal{A}(p_0)=2\omega_{n-1}\int_0^{\lambda\pi}\frac{1}{\cosh^n y}dy.
\end{equation}
\end{lemma}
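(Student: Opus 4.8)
The plan is to use formula (\ref{4.8}), which at $t_0=0$ already reduces the problem to two tasks: first, finding a closed form for the Busemann function $b_{\gamma_+}(0,w,\theta)$, and second, integrating $e^{nb_{\gamma_+}}$ over the conformal boundary $\mathbb{S}^{n-1}\times\mathbb{S}^1(\lambda)$. Since the integrand will turn out to depend only on the $\mathbb{S}^1$-variable, the second task is an elementary one-variable integral, so essentially all of the work lies in computing the Busemann function.

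To compute $b_{\gamma_+}$ I would pass to the universal cover. The space $(\mathbb{R}^n\times\mathbb{S}^1(\lambda),g^+)$ is the quotient of hyperbolic space $\mathbb{H}^{n+1}$ by the hyperbolic translation of length $2\pi\lambda$ along a fixed geodesic line $\ell$, and the coordinates $(t,w,\theta)$ are precisely the Fermi coordinates about $\ell$: here $t=\operatorname{dist}(\cdot,\ell)$, $w\in\mathbb{S}^{n-1}$ is the unit normal direction, and $\theta$ is the arc length along $\ell$, so that $\ell$ descends to the core circle $\{t=0\}\cong\mathbb{S}^1(\lambda)$ of circumference $2\pi\lambda$. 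In the hyperboloid model a point $(t,w,\theta)$ is represented by $(\cosh t\cosh\theta,\ \sinh t\,w,\ \cosh t\sinh\theta)$, and the Minkowski product gives the distance formula
\[
\cosh d_{\mathbb{H}}\big((t_1,w_1,\theta_1),(t_2,w_2,\theta_2)\big)=\cosh t_1\cosh t_2\cosh(\theta_1-\theta_2)-\sinh t_1\sinh t_2\,(w_1\cdot w_2).
\]
Because the base point has $t_0=0$, I set $t_1=0$; the $w$-dependence then disappears and $\cosh d_{\mathbb{H}}\big((0,w,\theta),(s,w_0,\theta_0)\big)=\cosh s\,\cosh(\theta-\theta_0)$, which is just the hyperbolic Pythagorean relation for the right geodesic triangle with legs $s$ (along $\gamma_+$) and $|\theta-\theta_0|$ (along $\ell$). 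Passing back to the quotient replaces $|\theta-\theta_0|$ by the circle distance $\bar\theta:=\operatorname{dist}_{\mathbb{S}^1(\lambda)}(\theta,\theta_0)\in[0,\pi\lambda]$, realized by the lift minimizing $\cosh(\theta-\theta_0-2\pi\lambda k)$; thus $\cosh d_{g^+}=\cosh s\,\cosh\bar\theta$. Letting $s\to\infty$ in $s-\operatorname{arccosh}(\cosh s\cosh\bar\theta)$ and using $\operatorname{arccosh}(z)=\ln(2z)+o(1)$ yields $b_{\gamma_+}(0,w,\theta)=-\ln\cosh\bar\theta$, which is independent of $w$.

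Substituting into (\ref{4.8}) then finishes the proof. Since $b_{\gamma_+}$ is independent of $w$, the integration over $\mathbb{S}^{n-1}$ contributes the factor $\omega_{n-1}=\mathrm{Vol}(\mathbb{S}^{n-1})$, and parametrizing $\mathbb{S}^1(\lambda)$ by arc length with $\theta_0=0$ turns the remaining factor into $\int_{-\pi\lambda}^{\pi\lambda}\cosh^{-n}|\theta|\,d\theta=2\int_0^{\lambda\pi}\cosh^{-n}y\,dy$, giving the asserted value $\mathcal{A}(p_0)=2\omega_{n-1}\int_0^{\lambda\pi}\cosh^{-n}y\,dy$. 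The one genuinely delicate point is the descent to the quotient: I must check that the quotient distance is realized by the lift minimizing $\cosh(\theta-\theta_0-2\pi\lambda k)$, i.e. by the circle distance $\bar\theta$, and confirm that $\theta$ is honest arc length along the core so that its half-circumference is exactly $\pi\lambda$. Everything else is the Busemann limit and a single one-dimensional integral.
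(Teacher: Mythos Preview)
Your argument is correct and follows essentially the same route as the paper's own proof: both compute $b_{\gamma_+}(0,w,\theta)=-\ln\cosh(\text{circle distance})$ via the hyperbolic Pythagorean relation $\cosh d=\cosh s\cdot\cosh\bar\theta$ and then integrate over $\mathbb{S}^{n-1}\times\mathbb{S}^1(\lambda)$. The only cosmetic difference is that the paper invokes the right-angle triangle $p_0\,q\,y$ with $q=(0_n,\theta)$ and cites the Pythagorean theorem directly, whereas you rederive that identity by lifting to $\mathbb{H}^{n+1}$ in the hyperboloid model; this makes your verification of the quotient distance (minimizing over deck translates) more explicit, but the geometric content is identical.
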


\begin{proof}
Let $p_0=(0_0,\theta_0)$ and $y=(t,w,\theta), t\geq 0.$ Choose $q=(0_n,\theta).$ Then the geodesic line $\overline{p_0q}$ is  perpendicular to $\overline{qy}$ at $q.$ We also know that
\begin{equation}\label{4.10}
d_{g^+}(y,q)=t,\ \ \ \
d_{g^+}(p_0,q)=d_{\mathbb{S}^1(\lambda)}(\theta,\theta_0)=\lambda|\theta-\theta_0|.
\end{equation}
Recall Pythagorean theorem in hyperbolic manifold, i.e.
\begin{equation}\label{4.11}
\cosh t\cdot\cosh (\lambda|\theta-\theta_0|)=\cosh (d_{g^+}(p_0,y)).
\end{equation}
Hence
\begin{equation}\label{4.12}
\begin{aligned}
  b_{\gamma_+}(0_n,\theta)&=\lim\limits_{t\rightarrow\infty}[s_E(y)-s_{p_0}(y)]
  \\&=\lim\limits_{t\rightarrow\infty}[t-\cosh^{-1}(\cosh t\cdot\cosh(\lambda|\theta-\theta_0|))]  \\
  &= -\ln\cosh (\lambda|\theta-\theta_0|).
\end{aligned}
\end{equation}
Then
\begin{equation}\label{4.13}
  \mathcal{A}(p_0)=\int_{\mathbb{S}^{n-1}\times\mathbb{S}^1(\lambda)}\frac{1}{\cosh^n (\lambda|\theta-\theta_0|)}d\Theta_{n-1}d\Theta_\lambda.
\end{equation}
Then (\ref{4.9}) is achieved by the variable substitution of multiple integrals.

\end{proof}
Notice that (\ref{4.9}) can be obtained by partial integration and recursion. Furthermore, we have the following estimate
\begin{equation}\label{4.14}
  0<\mathcal{A}(0_n,\theta_0)<2\omega_{n-1}\int_0^{+\infty}\frac{1}{\cosh^n y}dy=\omega_n.
\end{equation}
We observe that $\mathcal{A}((0_n,\theta_0))$ can take all values in $(0,\omega_n)$ for $\lambda\in(0,+\infty).$
\begin{lemma}
If $\lambda$ is small enough, then $\mathcal{A}$ is not a constant function.
\end{lemma}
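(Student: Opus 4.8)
The plan is to prove non-constancy by exhibiting two points of $\mathbb{R}^n\times\mathbb{S}^1(\lambda)$ where $\mathcal A$ differs. By (\ref{4.9}) the value on the core circle is $\mathcal A(0_n,\theta_0)=2\omega_{n-1}\int_0^{\lambda\pi}\cosh^{-n}y\,dy$, and by (\ref{4.14}) this is $<\omega_n$; moreover $\mathcal A(0_n,\theta_0)\le 2\pi\lambda\,\omega_{n-1}\to 0$ as $\lambda\to 0$, so it is genuinely \emph{small} for small $\lambda$. I would contrast this with the behaviour far from the core and show that
\[
\lim_{t_0\to+\infty}\mathcal A(t_0)=\omega_n ,
\]
where $\mathcal A(t_0)$ is the common value of $\mathcal A$ on the torus $\{t=t_0\}$ (it depends only on $t_0$ by the orthogonal-transformation argument already used for (\ref{4.8})). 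Granting the limit, for $\lambda$ small enough $\mathcal A(0_n,\theta_0)<\omega_n=\lim_{t_0\to\infty}\mathcal A(t_0)$, so $\mathcal A$ cannot be constant.

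To compute $\mathcal A(t_0)$ I would pass to the universal cover. Writing (\ref{4.3}) as the Fermi form $dt^2+\sinh^2 t\,g_{\mathbb S^{n-1}}+\cosh^2 t\,ds^2$ of $\mathbb H^{n+1}$ about a complete geodesic (the axis), one has $M=\mathbb H^{n+1}/\langle T\rangle$ with $T$ the hyperbolic translation $s\mapsto s+2\pi\lambda$ along the axis, and $d_M(p,y)=\min_{k\in\mathbb Z}d_{\mathbb H}(\tilde p,T^k\tilde y)$ for fixed lifts. Since $s_E=t$ is the distance from the axis, the Busemann integrand of (\ref{4.4}) becomes $b_{\gamma_+}(t_0,w,\theta)=\max_{k}\beta_{T^k\tilde\eta}(\tilde p_0)$, where $\tilde\eta=\tilde\eta(w,\theta)\in\partial\mathbb H^{n+1}$ is the endpoint of the perpendicular ray $\rho_\eta(t)=(t,w,s)$ and $\beta_\zeta(x)=\lim_{t\to\infty}[\,t-d_{\mathbb H}(x,\rho_\zeta(t))\,]$. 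The key point is that $e^{n\beta_\zeta(x)}$ is, up to a fixed constant, the Poisson kernel of $\mathbb H^{n+1}$: with $\mu:=d\Theta_{n-1}\,ds$ on $\partial\mathbb H^{n+1}\setminus\{N,S\}=\mathbb S^{n-1}\times\mathbb R$ one checks, exactly as in Example 4.1 (and consistently with the computation (\ref{4.12})), that $\int_{\partial\mathbb H^{n+1}}e^{n\beta_\zeta(x)}\,d\mu(\zeta)=\omega_n$ for \emph{every} $x$; this is precisely the statement $\mathcal A\equiv\omega_n$ on $\mathbb H^{n+1}$. A fundamental domain for $T$ is $D=\mathbb S^{n-1}\times[\,0,2\pi\lambda)$, on which $\mu$ descends to $d\Theta_{n-1}\,d\Theta_\lambda$.

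With these identifications (\ref{4.4}) reads $\mathcal A(t_0)=\int_D e^{\,n\max_k\beta_{T^k\tilde\eta}(\tilde p_0)}\,d\mu$. For the upper bound I would use $\max_k e^{n\beta_{T^k\tilde\eta}}\le\sum_k e^{n\beta_{T^k\tilde\eta}}$ together with the $T$-invariance of $\mu$ to retile by Tonelli: $\sum_k\int_D e^{n\beta_{T^k\tilde\eta}}d\mu=\int_{\partial\mathbb H^{n+1}}e^{n\beta_\zeta}d\mu=\omega_n$, recovering $\mathcal A(t_0)\le\omega_n$. For the lower bound I would keep only the sheet $k=0$ and center $D$ at the foot $s_0$ of $\tilde p_0=(t_0,w_0,s_0)$, giving $\mathcal A(t_0)\ge\int_D e^{n\beta_{\tilde\eta}(\tilde p_0)}\,d\mu$. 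As $t_0\to\infty$ the lift $\tilde p_0$ tends to the interior boundary point $\tilde\eta_0=(w_0,s_0)\in\mathrm{int}\,D$, so the measures $e^{n\beta_\zeta(\tilde p_0)}\,d\mu$ converge weakly to $\omega_n\,\delta_{\tilde\eta_0}$; hence $\int_D e^{n\beta_{\tilde\eta}(\tilde p_0)}d\mu\to\omega_n$. Squeezing between the two bounds yields $\lim_{t_0\to\infty}\mathcal A(t_0)=\omega_n$.

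Combining the two computations, for $\lambda$ small enough $\mathcal A(0_n,\theta_0)\le 2\pi\lambda\,\omega_{n-1}<\omega_n=\lim_{t_0\to\infty}\mathcal A(t_0)$, so $\mathcal A$ is non-constant (in fact this argument gives non-constancy for every $\lambda$, since $\mathcal A(0_n,\theta_0)<\omega_n$ strictly by (\ref{4.14})). The hard part will be the concentration step: I must pin down the Poisson-kernel normalisation, i.e. verify that $e^{n\beta_\zeta(x)}\,d\mu$ is a fixed multiple of the harmonic measure of $\mathbb H^{n+1}$ from $x$ and that its total mass in these Fermi coordinates is exactly $\omega_n$ (equivalently, that $\cosh^{-n}s\,d\Theta_{n-1}\,ds$ is the round measure on $\mathbb S^n$, which is already implicit in Example 4.1 and in (\ref{4.12})), and then invoke the standard fact that these harmonic measures concentrate at the boundary limit $\tilde\eta_0$ of $\tilde p_0$. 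Everything else is the elementary retiling and squeezing above.
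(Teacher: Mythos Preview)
Your proposal is correct and takes a genuinely different route from the paper. The paper's argument is elementary: it bounds the distance $s_{p_0}$ from above by a broken path through the product factors (the triangle inequality (\ref{4.15})), integrates the resulting explicit lower bound for $\mathcal{A}(t_0)$ in (\ref{4.16})--(\ref{4.18}), and extracts $\liminf_{t_0\to\infty}\mathcal{A}(t_0)\geq C(n)$ for a dimension-only constant $C(n)$ (e.g.\ $C(2)=4$, $C(3)=32\pi/39$). Since $\mathcal{A}(0_n,\cdot)\to 0$ as $\lambda\to 0$ by (\ref{4.9}), this forces non-constancy for $\lambda$ small. Your approach instead lifts to $\mathbb{H}^{n+1}$, identifies $e^{n\beta_\zeta(x)}\,d\mu$ with $\omega_n$ times the visual (Poisson) measure, and uses its concentration at the boundary to obtain the sharp limit $\lim_{t_0\to\infty}\mathcal{A}(t_0)=\omega_n$; combined with the strict inequality (\ref{4.14}) this yields non-constancy for \emph{every} $\lambda>0$, which is stronger than the lemma as stated. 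The trade-off is that the paper's proof is completely self-contained (only the triangle inequality and a calculus computation), whereas yours leans on the Poisson-kernel normalisation and the weak convergence of harmonic measures---exactly the step you flag as the one needing care. Note also that your retiling upper bound $\mathcal{A}(t_0)\le\omega_n$ is already available directly from Theorem~4.5 (Bishop--Gromov), so only the concentration lower bound is genuinely new in your argument.
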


\begin{proof}
Theorem 1.5 implies that the function $\mathcal{A}$ is uniformly bounded from below at infinity and the lower bound only depends on the dimension. On the other hand, (\ref{4.9}) tells us that
$\mathcal{A}(0_n,\cdot)$ tends to 0 as $\lambda$ goes to 0. Hence $\mathcal{A}$ is not constant as long as $\lambda$ is sufficiently small.
\end{proof}

\end{example}
From (\ref{4.14})we deduce that $\mathcal{A}(0_n,\theta_0) \nearrow\omega_n$ as $\lambda$ tends to infinity when
the hyperbolic manifold $(\mathbb{R}^n\times \mathbb{S}^1(\lambda),g^+)$ is convergent to the hyperbolic space in pointed Cheeger-Gromov topology. In fact, we have the following rigidity result:
\begin{theorem}[theorem 1.6]
  Let $(X,g^+)$ be an $n+1-$ dimensional AH metric and the Ricci curvature satisfies $Ric[g^+]\geq -ng^+$, then $\mathcal{A}\leq \omega_n$ and the equality holds at one point if and only if $X $is isometric to a hyperbolic space.
\end{theorem}
\begin{proof}
    Noticing that when the Ricci curvature is bounded from below, we can use the Bishop-Gromov volume  comparison theorem, i.e.
  $\frac{Vol(\partial B_{g^+}(p,t))}{Vol(\partial B_\mathbb{H}(0,t))}$ is monotonically decreasing on $t$ and
\begin{equation}\label{4.20}
  \frac{Vol(\partial B_{g^+}(p,t))}{Vol(\partial B_\mathbb{H}(o,t))}\leq 1, \ \ \forall t>0
\end{equation}
 The monotonicity guarantees the existence of limit when $t\rightarrow+\infty$ and we derive that $\mathcal{A}(p)\leq \omega_n.$
  \\ If $\mathcal{A}(p)=\omega_n$ for some $p\in X,$ then we can use the property of monotonically decreasing to get
\begin{equation}\label{4.21}
1=\lim\limits_{t\rightarrow +\infty}\frac{Vol(\partial B_{g^+}(p,t))}{Vol(\partial B_\mathbb{H}(o,t))}\leq\lim\limits_{t\rightarrow 0^+}\frac{Vol(\partial B_{g^+}(p,t))}{Vol(\partial B_\mathbb{H}(o,t))}=1
\end{equation}
  Hence $\frac{Vol(\partial B_{g^+}(p,t))}{Vol(\partial B_\mathbb{H}(o,t))}=1$ for any $t>0,$ as a consequence, $X$ is isometric to a hyperbolic space.
\end{proof}

\section{The capacity on AH manifolds}
The concept of capacity plays an important role in researching the isoperimetric inequalities and Sobolev inequalities in  Euclidean space. Many results are extended to some special manifold. Let's start with the notion on manifold.
\begin{definition}
Let $(M,g)$ be a Riemannian manifold. Suppose that $F\subseteq \Omega\subseteq M$ where $F$ is compact and $\Omega$ is open in $M.$ For each $p\geq 1,$ the p-Capacity is defined by
\begin{equation}\label{5.1}
  cap_p(F,\Omega)=\inf\limits_{u}\int_M|\nabla u|^p dV_g
\end{equation}
where the infimum is taken over all Lipschitz functions $u$ satisfying $u = 1$ on
$F$ and $u$ has compact support in $\Omega.$  If $\Omega=M,$ we write $cap_p(F,M)$ as $cap_p(F).$
\end{definition}
We study the capacity of balls in manifold, the formulas of which in simply connected space forms are already known. One can see section
2.2.4 in \cite{maz2013sobolev} for the Euclidean case.  We can directly get the capacity of balls in hyperbolic space $\mathbb{H}^{n+1},$
that is \cite{grigor1999isoperimetric}:
\begin{equation}\label{5.2}
  cap_p(B_r,B_R)=\omega_n[\int_r^R (\sinh t)^\frac{n}{1-p} dt]^{1-p}
\end{equation}
for $p>1.$ Let $R$ tends to infinity, we get that
\begin{equation}\label{5.3}
  cap_p(B_t)=\frac{1}{2^n}(\frac{n}{p-1})^{p-1}\omega_n e^{nt}+o(e^{nt}),\ \ t\rightarrow+\infty.
\end{equation}

\par Let's first recall some notions in the previous section. Suppose $(X,g^+)$ is an AH manifold and $q\in X.$  We denote the geodesic sphere by $B(t)=B_{g^+}(q,t)$ and claim that the symbol $\nabla ,|.|$ are with respect to $g^+.$
We also use $A(s)=Vol(\partial B(s),g^+)$ to denote the n-dimensional volume of the geodesic sphere.
\par Now we use the classical methods from \cite{polya1951isoperimetric} to get the upper bound of capacity via flux. Let $t$ be a fixed big number and define a function
\begin{equation}\label{5.4}
  f:[t,+\infty)\rightarrow [0,1], \ f(t)=1,f(+\infty)=\lim\limits_{s\rightarrow+\infty}f(s)=0
\end{equation}
Define $u=1$ on $B(t)$ and $u=f(s_q)$ on $X\setminus B(t)$ where $s_q=d_{g^+}(q,\cdot),$ then
\begin{equation}\label{5.5}
  \int_X|\nabla u|^pdV_{g^+}=\int_{X\setminus B(t)}[f'(s)]^p|\nabla s|^pdV_{g^+}=\int_t^{+\infty}[f'(s)]^p A(s)ds
\end{equation}
On the other hand,
\begin{equation}\label{5.6}
\begin{aligned}
1&=-\int_t^{+\infty}f'(s)ds=\int_t^{+\infty}-f'(s)[A(s)]^{\frac{1}{p}}[A(s)]^{-\frac{1}{p}}ds\\
&\leq(\int_t^{+\infty}[f'(s)]^pA(s)ds)^{\frac{1}{p}}(\int_t^{+\infty}[A(s)]^{\frac{1}{1-p}}ds)^{1-\frac{1}{p}}
\end{aligned}
\end{equation}
Thus when
\begin{equation}\label{5.7}
  f(s)=-(\int_t^{+\infty}[A(\tau)]^{\frac{1}{1-p}}d\tau)^{-1}\cdot\int_t^s([A(\tau)]^{\frac{1}{1-p}}d\tau+1
\end{equation}
we have the following equality:
\begin{equation}\label{5.8}
  \int_t^{+\infty}[f'(s)]^pA(s)ds=(\int_t^{+\infty}[A(s)]^{\frac{1}{1-p}}ds)^{1-p}
\end{equation}
\par With the above preparations, we could prove theorem 1.7.
\par We first study the capacity of AH manifold in the case the Ricci curvature is bounded from below by $-n,$ i.e. $Ric[g^+]\geq-ng^+.$ By the volume comparison theorem, $\frac{A(s)}{\sinh^ns}$ is monotonically decreasing to $\mathcal{A}(q)$ as $s\rightarrow +\infty.$ Hence  function $\mathcal{A}$ is well-defined.
$$\forall\varepsilon>0,\exists T<0,\forall t>T,\frac{A(t)}{(e^t/2)^n}<\mathcal{A}(q)+\varepsilon.$$
Thus
\begin{equation}\label{5.9}
\begin{aligned}
(\int_t^{+\infty}[A(s)]^{\frac{1}{1-p}}ds)^{1-p}&\leq\int_t^{+\infty}[(\mathcal{A}(q)+\varepsilon)(\frac{e^s}{2})^n]^{\frac{1}{1-p}}ds)^{1-p}
\\ &=(\mathcal{A}(q)+\varepsilon)\frac{1}{2^n}(\int_t^{+\infty} e^{\frac{ns}{1-p}}ds)^{1-p}
\\ &=\frac{1}{2^n}(\frac{n}{p-1})^{p-1}(\mathcal{A}(q)+\varepsilon)e^{nt}
\end{aligned}
\end{equation}
 Let $\varepsilon\rightarrow 0,$ we deduce that

\begin{equation}\label{5.10}
\limsup\limits_{t\rightarrow+\infty}\frac{cap_p(B_{g^+}(q,t))}{e^{nt}}\leq\frac{1}{2^n}(\frac{n}{p-1})^{p-1}\mathcal{A}(q)
\end{equation}
 Hence the upper bound is achieved. Then the first part of theorem 1.7 holds because of (\ref{5.10}) and theorem 1.6.
\\
 \par It is well known that the lower estimates for $p-$capacity could be described via the isoperimetric function. That is,
 an isoperimetric function $I(\tau)=$ the infimum of $Vol(\partial\Omega)$
for all precompact open set $\Omega\subseteq X$ such that $Vol(\Omega)\geq \tau.$
We now recall Maz’ya's isocapacitary inequalitiese of (2.2.8) in \cite{maz2013sobolev},
\begin{equation}\label{5.11}
  cap_p(F,\Omega)\geq (\int_{\mu(F)}^{\mu(\Omega)}[I(\tau)]^{\frac{p}{1-p}}d\tau)^{1-p}
\end{equation}
for $p>1.$  In our paper, $\mu(\Omega)$ is the $n+1$ dimensional Riemannian volume of $\Omega.$
\par Recall the Cheeger isoperimetric constant of $X,$
\begin{equation}\label{5.12}
\mathcal{C}h(X,g^+)=\inf\limits_{\Omega}\frac{Vol(\partial\Omega)}{Vol(\Omega)}
\end{equation}
 where the infimum is taken over all the compact (smooth) domains in $X.$
 If the Ricci curvature and  the scalar curvature of $X$ satisfies (\ref{1.8}) , then it is shown
 in \cite{hijazi2020cheeger} that the Cheeger isoperimetric constant
\begin{equation}\label{5.13}
\mathcal{C}h(X,g^+)= n\Leftrightarrow Y(\partial X,[\hat{g}])\geq 0.
\end{equation}
 Hence under the conditions of theorem 1.7, we derive that the isoperimetric function of $X$ is $I(\tau)=n\tau$ for any $\tau>0.$  Therefore,
\begin{equation}\label{5.14}
cap_p(B_{g^+}(q,t))\geq (\int_{Vol(B_{g^+}(q,t))}^{+\infty}(n\tau)^{\frac{p}{1-p}}d\tau)^{1-p}=\frac{n^p}{(p-1)^{p-1}}Vol(B_{g^+}(q,t))
\end{equation}
Then
\begin{equation}\label{5.15}
\begin{aligned}
\liminf\limits_{t\rightarrow+\infty}\frac{cap_p(B_{g^+}(q,t))}{e^{nt}}&
\geq\liminf\limits_{t\rightarrow+\infty}\frac{n^p}{(p-1)^{p-1}}\frac{Vol(B_{g^+}(q,t))}{e^{nt}}
\\&=\frac{n^p}{(p-1)^{p-1}}\lim\limits_{t\rightarrow+\infty}\frac{\frac{d}{dt}[Vol(B_{g^+}(q,t))]}{ne^{nt}}
\\&=\frac{1}{2^n}(\frac{n}{p-1})^{p-1}\mathcal{A}(q)
\end{aligned}
\end{equation}
 Finally, (\ref{5.10}) and (\ref{5.15}) imply that
\begin{equation}\label{5.16}
\lim\limits_{t\rightarrow+\infty}\frac{cap_p(B_{g^+}(q,t))}{e^{nt}}=\frac{1}{2^n}(\frac{n}{p-1})^{p-1}\mathcal{A}(q)
\end{equation}

 \section{The capacity of  a general AH manifold}
 In this section, we study the capacity of an AH manifold $(X,g^+).$ The curvature conditions (\ref{1.3}) and (\ref{1.8}) may  not hold, thus the relative volume function $\mathcal{A}$ may not exist. However, we can still estimate the capacity of balls in $X$ with the property of asymptotically hyperbolic.
 \par Let $g=x^2g^+$  be the $C^1$ geodesic compactification on $X_\delta=\partial X\times(0,\delta)$ for some small $\delta<1$ and set $r=-\ln x.$ We know that $r$  is a distance function on $(X_\delta,g^+).$  For any $q\in X,$ let $s_q(\cdot)$ be the distance of function from $q.$ We still have the property that $s_q-r$  is bounded on $X_\delta,$  although $|d(s-r)|_g$ may be unbounded. Assume that $|s_q-r|<D$ on $X_\delta.$
 \par Recall that $B(t)=B_{g^+}(q,t).$
For any $t$ large enough, we have that
$$B(t)\subseteq X_{\varepsilon(t)}\subseteq B(t+2D)$$
and here $\varepsilon(t)=e^{-(t+D)}.$ Then
\begin{equation}\label{6.1}
  cap_p(B(t))\leq cap_p(X_{\varepsilon(t)})\leq cap_p(B(t+2D)).
\end{equation}
Hence
\begin{equation}\label{6.2}
  cap_p(B(t))=O(e^{nt})\Leftrightarrow cap_p(X_{\varepsilon(t)})=O(e^{n\varepsilon(t)}),\ t\rightarrow+\infty.
\end{equation}
\par We will prove that $cap_p(X_{\varepsilon})=O(\varepsilon^{-n}),\ \  \varepsilon\rightarrow 0,$ which  would imply that theorem 1.8 holds.
\par We will use the same method as that in section 5 to prove it. For a small $\varepsilon>0,$ define $f:[0,\varepsilon)\rightarrow [0,1], f(0)=0, \ f(\varepsilon)=1$
and consider the function $u(\cdot)=f(x(\cdot))$ on $X_\varepsilon$ and $u(\cdot)=1$ on $X\setminus X_\varepsilon.$ Then for any $p>1,$

\begin{equation}\label{6.3}
\begin{aligned}
\int_X|\nabla^{g^+} u|_{g^+}^pdV_{g^+}&=\int_{X\varepsilon}x^{p-n-1}|\nabla^gu|_g^pdV_g
\\&=\int_{X\varepsilon}x^{p-n-1}|f'(x)|^p\cdot|\nabla^gx|_g^pdV_g
\\&=\int_0^\varepsilon x^{p-n-1}|f'(x)|^p\cdot Vol(\Sigma_x)dx
\end{aligned}
\end{equation}
where $Vol(\Sigma_x)$ denote the $n-$dimensional volume of the level set $\Sigma_x$ of the geodesic defining function.
Now let $T(x)=x^{p-n-1}\cdot Vol(\Sigma_x).$

\begin{equation}\label{6.4}
\begin{aligned}
1&=\int_0^\varepsilon f'(x)dx=\int_0^\varepsilon f'(x)T^{\frac{1}{p}}(x)T^{-\frac{1}{p}}(x)dx\\
&\leq (\int_0^\varepsilon [f'(x)]^pT(x)dx)^{\frac{1}{p}}(\int_0^\varepsilon T^{\frac{1}{1-p}}(x)dx)^{1-\frac{1}{p}}
\end{aligned}
\end{equation}
The equality holds if and only if $$f(x)=(\int_0^\varepsilon T^{\frac{1}{1-p}}(\tau)d\tau)^{-1}\int_0^xT^{\frac{1}{1-p}}(\tau)d\tau$$ for $x\in[0,\varepsilon].$ Then we infer that

\begin{equation}\label{6.5}
\begin{aligned}
cap_p(X_\varepsilon)&\leq \int_X|\nabla^{g^+} u|_{g^+}^pdV_{g^+}=(\int_0^\varepsilon T^{\frac{1}{1-p}}(x)dx)^{1-p}
\\&=(\int_0^\varepsilon  x^{\frac{p-n-1}{1-p}}\cdot [Vol(\Sigma_x)]^{\frac{1}{1-p}}dx)^{1-p}
\end{aligned}
\end{equation}

$g$ is a $C^1$ geodesic compactification, let $\hat{g}$ be its boundary metric, hence by Gauss lemma,
\begin{equation}\label{6.6}
g=dx^2+g_x=dx^2+\hat{g}+O(x)
\end{equation}
holds near boundary. Then
\begin{equation}\label{6.7}
\frac{\det g_x}{\det \hat{g}}=\det(\hat{g}^{-1}g_x)=\det(I_n+O(x))=1+O(x)
\end{equation}

Therefore
\begin{equation}\label{6.8}
Vol(\Sigma_x)=\int_{\Sigma_x}dV_{g_x}=\int_{\partial X}\sqrt{\frac{\det g_x}{\det \hat{g}}}dV_{\hat{g}}=Vol(\partial X,\hat{g})+O(x)
\end{equation}
Back to (\ref{6.5}), we obtain
\begin{equation}
\begin{aligned}
cap_p(X_\varepsilon)&\leq (\int_0^\varepsilon  x^{\frac{p-n-1}{1-p}}\cdot [Vol(\partial X,\hat{g})+O(x))]^{\frac{1}{1-p}}dx)^{1-p}\\
&=Vol(\partial X,\hat{g})(\int_0^\varepsilon  x^{\frac{p-n-1}{1-p}}(1+O(x))dx)^{1-p}\\
&=Vol(\partial X,\hat{g})(\frac{n}{p-1})^{p-1}\varepsilon^{-n}+O(\varepsilon^{-n+1-p})
\end{aligned}
\end{equation}

So $cap_p(X_\varepsilon)=O(\varepsilon^{-n}),$ and it is easy to get the upper bound
\begin{equation}
\limsup\limits_{\varepsilon\rightarrow 0}\varepsilon^n\cdot cap_p(X_\varepsilon)\leq Vol(\partial X,\hat{g})(\frac{n}{p-1})^{p-1}.
\end{equation}

\bibliographystyle{plain}%

\bibliography{bibfile}

\noindent{Xiaoshang Jin}\\
  School of mathematics and statistics, Huazhong University of science and technology, Wuhan, P.R. China. 430074
 \\Email address: jinxs@hust.edu.cn

\end{document}